\definecolor {shadecolor}{rgb}{0.92,0.92,0.92}
\bf\color{blue},
\it\color[RGB]{0,96,96},
\slshape\color[RGB]{128,0,0},
\newtheorem{ex}{Exercise}[section]
\newtheorem{thm}[ex]{Theorem}
\newtheorem{prop}[ex]{Proposition}
\newtheorem{rmk}[ex]{Remark}
\newtheorem{fac}[ex]{Fact}
\begin{document}
		\title{A solution to a question by A.Wigderson and Y.Wigderson}
	\author{Yiyu Tang}
	\date{Laboratoire d'analyse et de math\'ematiques appliqu\'ees, Universit\'e Gustave Eiffel}
\maketitle
\section{Introduction}
\paragraph{}In this note, we give a solution to a little conjecture proposed by A. Wigderson and Y. Wigderson when they consider a family of "Heisenberg-like" uncertainty principle in their paper \cite{refww}\\
\par 
\textbf{Question} (\cite{refww}, Conjecture 4.13).
	For $1<q\leq\infty$ and $q\neq2$, define $F_q:\mathscr{S}(\mathbb{R})\setminus\{0\}\rightarrow \mathbb{R}_{>0}$ by
	\begin{equation*}
	\begin{aligned}
	F_q(f):=\frac{||f||_q||\hat{f}||_q}{||f||_2||\hat{f}||_2}
	\end{aligned}
	\end{equation*}
Is $F_q$ surjective? i.e. is the image of $F_q$ all of $\mathbb{R}_{>0}$? Here, $\mathscr{S}(\mathbb{R})$ is the Schwartz space on real line, and $\hat{f}$ is the Fourier transform defined on $\mathscr{S}(\mathbb{R})$ by $\hat{f}(\xi)=\int_\mathbb{R}f(x)\mathrm{e}^{-2\pi\mathrm{i}x\xi}\mathrm{d}x$.
Finally, notation $||f||_q$ is the $L^q$ norm of $f$: $||f||_q=\big(\int_\mathbb{R}|f|^q\mathrm{d}x\big)^q$.
\par The classical Heisenberg uncertainty principle says $||f||_2||\hat{f}||_2\leq4\pi(\int|xf(x)|^2\mathrm{d}x)^{1/2}(\int|\xi \hat{f}(\xi)|^2\mathrm{d}\xi)^{1/2}$ when $f\in\mathscr{S}(\mathbb{R})$. If, for example, the image of  $F_q$ is bounded below by $c>0$, then it tells us $||f||_2||\hat{f}||_2\leq\frac{1}{c}||f||_q||\hat{f}||_q$, this could be considered as a variant of classical uncertainty principle.
\paragraph{}We answer this question affirmatively when $2<q<\infty$(the case $q=\infty$ has been proved in \cite{refww}), and negatively when $1<q<2$. The author got this result in May 2021, and presented it in a conference \textbf{ComPlane: the next generation} in June 17-18, 2021, see \cite{conf}. The same result was also obtained in 19th July, 2021 by three people, Linzhe Huang, Zhengwei Liu, Jinsong Wu, see \cite{refhlw}.
\begin{thm}
	If $2<q<\infty$, then the image of $F_q$ is all $\mathbb{R}_{>0}$. If $1<q<2$, then the image of $F_q$ is bounded below by $1$, i.e. $	F_q(f)\geq 1$ for any $f\in\mathscr{S}(\mathbb{R})\setminus\{0\}$.

\end{thm}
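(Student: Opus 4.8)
The plan here is to combine the Hausdorff--Young inequality with the logarithmic convexity of $L^p$ norms. Write $q'=q/(q-1)$ for the conjugate exponent, so that $q'>2$. With the present normalization of the Fourier transform, Plancherel ($\|\hat g\|_2=\|g\|_2$) and the trivial bound $\|\hat g\|_\infty\le\|g\|_1$ interpolate by Riesz--Thorin to the Hausdorff--Young inequality $\|\hat g\|_{q'}\le\|g\|_q$ (constant $1$) for $1\le q\le 2$. Applying this to $g=f$ gives $\|\hat f\|_{q'}\le\|f\|_q$, and applying it to $g=\hat f$, together with $\widehat{\hat f}(x)=f(-x)$, gives $\|f\|_{q'}\le\|\hat f\|_q$. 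Since $\tfrac1q+\tfrac1{q'}=1$, the exponent $2$ is the midpoint of $[q,q']$ in the sense $\tfrac12=\tfrac12\cdot\tfrac1q+\tfrac12\cdot\tfrac1{q'}$, so log-convexity yields $\|g\|_2\le\|g\|_q^{1/2}\|g\|_{q'}^{1/2}$. Using this for both $f$ and $\hat f$ and then substituting the two Hausdorff--Young bounds,
\[
\|f\|_2\,\|\hat f\|_2\le \|f\|_q^{1/2}\|f\|_{q'}^{1/2}\|\hat f\|_q^{1/2}\|\hat f\|_{q'}^{1/2}\le \|f\|_q^{1/2}\|\hat f\|_q^{1/2}\|\hat f\|_q^{1/2}\|f\|_q^{1/2}=\|f\|_q\|\hat f\|_q,
\]
which is exactly $F_q(f)\ge 1$. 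This half requires no further work.

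\textbf{Surjectivity for $2<q<\infty$.} The strategy is a soft connectedness argument reinforced by two explicit asymptotic families. First I would record that $F_q$ is continuous on $\mathscr S(\mathbb R)\setminus\{0\}$: Schwartz convergence forces convergence of every $L^p$ norm and of $\hat f$, and the denominator never vanishes since $\|\hat f\|_2=\|f\|_2>0$ for $f\neq0$. As $\mathscr S(\mathbb R)$ is an infinite-dimensional real vector space, the punctured space $\mathscr S(\mathbb R)\setminus\{0\}$ is path-connected, so $\mathrm{Im}(F_q)$ is a subinterval of $(0,\infty)$. It then suffices to produce functions with $F_q$ arbitrarily large and arbitrarily small; the intermediate value theorem applied along a path joining two such functions realizes every prescribed value $c\in(0,\infty)$, giving $\mathrm{Im}(F_q)=(0,\infty)$.

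\textbf{The two families.} Note that $F_q(f)=\tfrac{\|f\|_q}{\|f\|_2}\cdot\tfrac{\|\hat f\|_q}{\|\hat f\|_2}$ is a product of two scale-invariant ``peakedness'' ratios, and for $q>2$ such a ratio is large for sharply peaked functions and small for flat ones; hence a single length scale (a pure Gaussian or a single box both give a constant value) cannot move $F_q$, and one must use two competing scales. To force $F_q\to\infty$ I would take $f_a(x)=e^{-\pi x^2/a^2}+a^{s}e^{-\pi a^2 x^2}$ with a fixed exponent $s\in(2/q,1)$ and $a\to0$: the narrow Gaussian makes $f_a$ spiky while its broad partner carries the $L^2$ mass, and, dually, $\hat f_a(\xi)=a\,e^{-\pi a^2\xi^2}+a^{s-1}e^{-\pi \xi^2/a^2}$ is spiky for exactly the same reason. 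A direct computation of the four Gaussian integrals, checking that the cross terms are negligible in the regime $a\ll a^{s}\ll a^{2/q}$, gives $F_q(f_a)\asymp a^{2/q-s}\to\infty$. To force $F_q\to0$ I would use a chirp on a smooth plateau, $g_\alpha(x)=e^{\pi i\alpha x^2}w(x)$, where $w$ is a fixed Schwartz function identically $1$ on a long interval, and let $\alpha\to\infty$: here $|g_\alpha|=|w|$ is flat, and stationary phase gives $|\hat g_\alpha(\xi)|\approx\alpha^{-1/2}|w(\xi/\alpha)|$, a plateau of height $\alpha^{-1/2}$ and width $\sim\alpha$, which is also flat; balancing norms yields $F_q(g_\alpha)\asymp \alpha^{1/q-1/2}\to0$. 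A pure Gaussian window fails precisely because its transform is again a Gaussian of fixed peakedness; the flat top of $w$ is what genuinely flattens $\hat g_\alpha$.

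\textbf{Main obstacle.} The conceptual skeleton---interpolation for the lower bound, and connectedness plus two limiting families for surjectivity---is straightforward; the real work is in the two asymptotic estimates. For $f_a$ one must verify that the overlap and cross terms of the concentric Gaussians do not spoil the clean scaling in the chosen window $2/q<s<1$, and for $g_\alpha$ one must make the stationary-phase approximation rigorous, in particular controlling the Fresnel ripples at the two edges of the plateau so that they do not inflate $\|\hat g_\alpha\|_q$ for $q>2$. These estimates, rather than any structural difficulty, are where the attention should go.
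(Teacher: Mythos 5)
Your argument for $1<q<2$ is the paper's argument verbatim in substance: Hausdorff--Young applied to $\hat f$ to get $\|f\|_{q'}\le\|\hat f\|_q$, combined with the log-convexity bound $\|f\|_2\le\|f\|_q^{1/2}\|f\|_{q'}^{1/2}$ (the paper streamlines slightly by invoking Plancherel $\|\hat f\|_2=\|f\|_2$ so that only one application of Hausdorff--Young is needed, but this is cosmetic). The structural half of the surjectivity argument --- continuity of $F_q$ on the punctured Schwartz space, connectedness, hence the image is an interval to be pinned down by two limiting families --- is also exactly the paper's. Your family for $F_q\to\infty$ is essentially the paper's as well: the paper's self-dual $g_c(x)=c^{-1/2}e^{-\pi x^2/c^2}+c^{1/2}e^{-\pi c^2x^2}$ is, up to an overall scalar, your $f_a$ with $a=1/c$ and $s=1$ (the endpoint you exclude, though your open window $(2/q,1)$ works equally well, and your claimed asymptotic $F_q(f_a)\asymp a^{2/q-s}$ checks out; the cross terms are harmless since all summands are positive, so one-sided bounds come free). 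Where you genuinely diverge is the family forcing $F_q\to0$: you propose a chirp on a flat plateau and a stationary-phase analysis, whereas the paper uses the chirped Gaussian $f_a(x)=e^{-\pi(a^2-1)x^2}e^{-2\pi\mathrm{i}ax^2}$, whose Fourier transform is again an explicit chirped Gaussian, so that $F_q(f_a)=\sqrt2\,q^{-1/q}\big(\tfrac{a^2+1}{a^2-1}\big)^{1/q-1/2}$ in closed form and sweeps out $(0,\sqrt2\,q^{-1/q})$ as $a\to1^+$ with no asymptotic analysis at all. This matters because the stationary-phase step is precisely the one piece of your outline you do not execute, and making it rigorous in $L^q$ (uniform control of the Fresnel edge oscillations) is real work; the modulated-Gaussian trick buys you that entire half for the price of one Gaussian integral. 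I would strongly recommend replacing the plateau chirp with the exactly computable chirped Gaussian; with that substitution your proof is complete and coincides with the paper's.
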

\paragraph{}Also, we can consider the general pair $(q,p)$ instead of $(q,2)$. This is the following theorem.

\begin{thm}\label{generalpq}Assuming $1<q<p<\infty$, and consider the mapping
	\begin{equation*}
	F_{q,p}(f)=\frac{||f||_q||\hat{f}||_q}{||f||_p||\hat{f}||_p},\ f\in\mathscr{S}(\mathbb{R})\setminus\{0\}.
	\end{equation*}
(i) If $\frac{1}{p}+\frac{1}{q}\geq1$, then image of $F_{q,p}$ is an infinite subinterval of $[1,\infty)$.\\
(ii) If $\frac{1}{p}+\frac{1}{q}<1$, then the image of $F_{q,p}$ is $(0,\infty)$.
\end{thm}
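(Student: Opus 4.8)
The plan is to treat $F_{q,p}$ as a continuous, scale-invariant functional and to pin down its range through a soft lower bound together with two explicit extremal families. First I would record the elementary structural facts: $F_{q,p}$ is continuous and strictly positive on $\mathscr{S}(\mathbb{R})\setminus\{0\}$, it is invariant under dilations $f(x)\mapsto f(\lambda x)$, and it satisfies the duality $F_{q,p}=1/F_{p,q}$. Since $\mathscr{S}(\mathbb{R})\setminus\{0\}$ is path-connected, the image $J:=\mathrm{Im}(F_{q,p})$ is a subinterval of $(0,\infty)$. Thus it suffices to prove $J\subseteq[1,\infty)$ in the first regime, and to locate $\inf J$ and $\sup J$ by exhibiting suitable families.

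For the lower bound when $\frac1p+\frac1q\ge1$, I would introduce, for fixed $f$, the function
\[
\psi(t):=\log\bigl(\|f\|_{1/t}\,\|\widehat f\|_{1/t}\bigr),\qquad t\in(0,1),
\]
which is convex by the log-convexity (interpolation) of $L^{r}$-norms. Applying the classical Hausdorff--Young inequality $\|\widehat f\|_{r'}\le\|f\|_r$ (constant $1$, valid for $1\le r\le2$) both to $f$ and to $\widehat f$ and multiplying gives $\|f\|_{r'}\|\widehat f\|_{r'}\le\|f\|_r\|\widehat f\|_r$, i.e. $\psi(1-t)\le\psi(t)$ for $t\ge\tfrac12$. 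A convex function with this reflection property has its minimizer at $\le\tfrac12$ and is therefore non-decreasing on $[\tfrac12,1)$. A short case analysis on the positions of $\tfrac1q>\tfrac1p$ relative to $\tfrac12$, using $\frac1p+\frac1q\ge1\Leftrightarrow \tfrac1q\ge1-\tfrac1p$, then yields $\psi(\tfrac1q)\ge\psi(\tfrac1p)$, that is $F_{q,p}(f)\ge1$.

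Next I would produce the two extremal families. To show $\sup J=\infty$ for every $1<q<p<\infty$, take the chirp $f_L(x)=e^{i\pi x^2}\psi(x/L)$ with $\psi\in C_c^\infty$; then $\|f_L\|_r=L^{1/r}\|\psi\|_r$, while a Fresnel (stationary-phase) computation shows $|\widehat{f_L}(\xi)|\to|\psi(\xi/L)|$ with controlled error, so $\|\widehat{f_L}\|_r\sim L^{1/r}\|\psi\|_r$; hence $F_{q,p}(f_L)\sim L^{2(1/q-1/p)}\to\infty$. To show $\inf J=0$ in the regime $\frac1p+\frac1q<1$ (which forces $p>2$), I would use the self-dual two-scale Gaussian $f_\delta=\delta^{1/2}G_\delta+G_{1/\delta}$, where $G_a(x)=e^{-\pi a x^2}$, for which $\widehat{f_\delta}=f_\delta$. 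Splitting $\|f_\delta\|_r^r$ into the spike (height $1$, width $\sim\delta^{1/2}$) and the background (height $\sim\delta^{1/2}$, width $\sim\delta^{-1/2}$) gives $\|f_\delta\|_r\asymp\delta^{a(r)}$ as $\delta\to0$, with $a(r)=\tfrac1{2r}$ for $r\ge2$ and $a(r)=\tfrac12-\tfrac1{2r}$ for $r\le2$; consequently $F_{q,p}(f_\delta)=(\|f_\delta\|_q/\|f_\delta\|_p)^2\asymp\delta^{2(a(q)-a(p))}$, and one checks that $a(q)>a(p)$ holds \emph{exactly} when $\frac1p+\frac1q<1$, so $F_{q,p}(f_\delta)\to0$.

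Assembling the pieces: in the first regime $J$ is an interval contained in $[1,\infty)$ with $\sup J=\infty$, hence an infinite subinterval of $[1,\infty)$; in the second $J$ is an interval with $\inf J=0$ and $\sup J=\infty$, hence $J=(0,\infty)$. I expect the main obstacle to be the rigorous control of the Fourier-side norms of the chirp: one must upgrade the pointwise Fresnel asymptotics $|\widehat{f_L}|\approx|\psi(\cdot/L)|$ to uniform $L^r$ estimates with a genuinely negligible error over the frequency band of length $\sim L$. The lower bound, by contrast, is soft once the convexity-plus-Hausdorff--Young mechanism is in place; the only care needed there is to match the case analysis precisely to the boundary $\frac1p+\frac1q=1$.
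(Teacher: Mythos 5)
Your proposal is correct in outline and reaches the theorem by the same overall architecture as the paper (connectedness gives an interval; a lower bound of $1$ in regime (i); a self-dual two-scale Gaussian driving $F_{q,p}\to0$ in regime (ii); a chirp-type family driving $F_{q,p}\to\infty$), but two of your components differ in a way worth recording. For the lower bound you use the convexity of $t\mapsto\log\bigl(\|f\|_{1/t}\|\widehat f\|_{1/t}\bigr)$ together with the Hausdorff--Young reflection $\psi(1-t)\le\psi(t)$; this is a clean unification of what the paper does in two separate sub-cases (interpolating $\|f\|_p\le\|f\|_q^{\theta}\|f\|_2^{1-\theta}$ to reduce $q<p<2$ to the known bound $F_{q,2}\ge1$, and applying Hausdorff--Young to reduce $q<2\le p$ to $F_{q,p'}$). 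Your convexity-plus-reflection argument checks out (a convex $\psi$ with $\psi(1-t)\le\psi(t)$ for $t\ge\tfrac12$ is non-decreasing on $[\tfrac12,1)$, and the case $1/p<1/2$ is handled by one extra reflection), and it makes the role of the boundary $\tfrac1p+\tfrac1q=1$ transparent. Your two-scale family $f_\delta=\delta^{1/2}G_\delta+G_{1/\delta}$ is, up to the substitution $\delta=c^{-2}$ and an overall constant, literally the paper's $g_c$, and your exponent bookkeeping $a(r)$ matches the paper's asymptotics.

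The one place where your route leaves genuine work undone is the family witnessing $\sup\mathrm{Im}(F_{q,p})=\infty$. You take $f_L(x)=e^{i\pi x^2}\psi(x/L)$ with $\psi\in C_c^\infty$ and invoke stationary phase to claim $\|\widehat{f_L}\|_r\sim L^{1/r}\|\psi\|_r$; you correctly flag that upgrading the pointwise Fresnel asymptotics to uniform $L^r$ control of the error over a frequency band of length $\sim L$ is the missing step. This is standard but not free. The paper sidesteps it entirely by using the chirped Gaussian $f_a(x)=e^{-\pi(a^2-1)x^2}e^{-2\pi i a x^2}$, whose Fourier transform is again an explicit Gaussian chirp, so that
\begin{equation*}
F_{q,p}(f_a)=\frac{(1/q)^{1/q}}{(1/p)^{1/p}}\Bigl(\frac{a^2+1}{a^2-1}\Bigr)^{\frac1q-\frac1p}
\end{equation*}
is computed in closed form and visibly sweeps out $\bigl(\tfrac{(1/q)^{1/q}}{(1/p)^{1/p}},\infty\bigr)$ as $a\downarrow1$. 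If you replace your compactly supported chirp by this Gaussian chirp, every step of your argument becomes an exact computation and the proof is complete.
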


\begin{rmk}
	During the proof of the theorem, one can easily find that the bound $1$ is far from better. In fact, we use the Hausdorff-Young inequality several times in the proof, which says that $||\hat{f}||_{p^\prime}\leq||f||_p$ when $p\in[1,2]$. However, the sharp inequality should be $||\hat{f}||_{p^\prime}\leq\sqrt{(p^{1/p})/(p^\prime)^{1/p^\prime}}||f||_p$. This is a famous theorem due to W. Beckner. Maybe find the infimum of image of $F_q$ is more difficult than determine whether $F_q$ is surjective.
\end{rmk}

\section{Several facts}
\paragraph{} We recall the topology on $\mathscr{S}(\mathbb{R})$, for $\alpha,\ \beta\in\mathbb{N}$, define
$\rho_{\alpha,\beta}(f)=\sup_{x\in\mathbb{R}}\big|x^\alpha\frac{\mathrm{d}^\beta}{\mathrm{d}x^\beta}f(x)\big|$. Let $\{\rho_j\}_j$ be an enumeration of $\{\rho_{\alpha,\beta}\}_{\alpha,\beta\in\mathbb{N}}$, the following is a metric on $\mathscr{S}(\mathbb{R})$:
\begin{equation*}
d(f,g)=\sum_{j=1}^\infty2^{-j}\frac{\rho_j(f-g)}{1+\rho_j(f-g)}.
\end{equation*}
Under this metric, space $(\mathscr{S}(\mathbb{R}),d)$ is complete, so $\mathscr{S}(\mathbb{R})$ is a Frechet space. A sequence $\{f_n\}\subset\mathscr{S}(\mathbb{R})$ is said to converges to an $f\in\mathscr{S}(\mathbb{R})$ in $\mathscr{S}(\mathbb{R})$, if $d(f_n,f)$ goes to $0$ when $n$ tends to infinity.

\par The following facts are basic properties of $\mathscr{S}(\mathbb{R})$ and Fourier transform on $\mathscr{S}(\mathbb{R})$. All of them can be found in any textbook of Fourier analysis.
\begin{fac}
Firstly, the Fourier transform $f\mapsto\hat{f}$ is a homeomorphism on $\mathscr{S}$ to itself.
Secondly, suppose that functions $\{f_n\}_{n\geq1}$ and $f$ belong to $\mathscr{S}(\mathbb{R})$, if $f_n$ converges to $f$ in $\mathscr{S}(\mathbb{R})$, then $f_n$ converges to $f$ in $L^p$ for all $1\leq p\leq\infty$. Therefore, the function $f\mapsto||f||_p$ is continuous on $\mathscr{S}(\mathbb{R})$ for all $1\leq p\leq \infty$.
\end{fac}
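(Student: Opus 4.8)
The plan is to treat the two displayed assertions separately and then read off the ``therefore'' clause as an immediate corollary of the second one.

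For the first assertion, I would show that $f\mapsto\hat f$ maps $\mathscr{S}(\mathbb{R})$ into itself continuously, and that the inverse has the same form. The engine is the pair of intertwining identities
\begin{equation*}
\widehat{\tfrac{d^\beta}{dx^\beta}f}(\xi)=(2\pi\mathrm{i}\xi)^\beta\hat f(\xi),\qquad
\tfrac{d^\alpha}{d\xi^\alpha}\hat f(\xi)=(-2\pi\mathrm{i})^\alpha\widehat{x^\alpha f}(\xi),
\end{equation*}
valid on $\mathscr{S}$ by integration by parts and differentiation under the integral sign. Combining them shows that $\xi^\beta\frac{d^\alpha}{d\xi^\alpha}\hat f$ equals, up to a constant, the Fourier transform of $\frac{d^\beta}{dx^\beta}(x^\alpha f)$. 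Since every Schwartz function lies in $L^1$ and $||\hat g||_\infty\leq||g||_1$, this gives an estimate of the form $\rho_{\beta,\alpha}(\hat f)\leq C\,||\frac{d^\beta}{dx^\beta}(x^\alpha f)||_1$. Expanding the derivative by the Leibniz rule produces a finite sum of terms $c\,x^{\alpha-k}f^{(\beta-k)}$, whose $L^1$ norms I bound by finitely many seminorms $\rho_{\gamma,\delta}(f)$ exactly as in the second assertion below; hence each seminorm of $\hat f$ is controlled by a finite sum of seminorms of $f$. For a linear map of Fréchet spaces such seminorm bounds are equivalent to continuity, so $f\mapsto\hat f$ is continuous. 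The Fourier inversion theorem identifies the inverse with $g\mapsto\check g$, $\check g(x)=\hat g(-x)$, which has the same structure and is continuous by the identical argument; being a continuous linear bijection with continuous inverse, the Fourier transform is a homeomorphism.

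For the second assertion, the crux is a single decay estimate bounding $||f||_p$ by finitely many seminorms. When $p=\infty$ this is immediate, since $||f||_\infty=\rho_{0,0}(f)$. For $1\leq p<\infty$ I would split $\int_\mathbb{R}|f|^p\,\mathrm{d}x$ over $|x|\leq1$ and $|x|>1$: on the first region the integrand is at most $\rho_{0,0}(f)^p$, while on the second I use $|f(x)|\leq\rho_{2,0}(f)/x^2$, so that $\int_{|x|>1}|f|^p\,\mathrm{d}x\leq\rho_{2,0}(f)^p\int_{|x|>1}|x|^{-2p}\,\mathrm{d}x$, the last integral being finite because $2p>1$. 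This yields $||f||_p\leq C_p\big(\rho_{0,0}(f)+\rho_{2,0}(f)\big)$. Since convergence $f_n\to f$ in the metric $d$ is equivalent to $\rho_j(f_n-f)\to0$ for every $j$, applying the estimate to $f_n-f$ gives $||f_n-f||_p\to0$, which is convergence in $L^p$.

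Finally, the continuity of $f\mapsto||f||_p$ follows at once from the reverse triangle inequality $\big|\,||f_n||_p-||f||_p\,\big|\leq||f_n-f||_p\to0$, so the norm is sequentially continuous, and since $\mathscr{S}(\mathbb{R})$ is metrizable sequential continuity is genuine continuity. I expect the only step demanding real care to be the first assertion, where one must track how the intertwining identities turn a seminorm of $\hat f$ into an $L^1$ norm of a derivative of $x^\alpha f$ and then bound that uniformly by seminorms of $f$; the second and third assertions are then bookkeeping built on the same decay estimate.
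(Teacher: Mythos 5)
Your proposal is correct, and it matches the paper's treatment in the only sense available: the paper offers no argument of its own but cites Grafakos (Proposition 2.2.6 and Corollary 2.2.15), whose proofs are precisely the two devices you use --- the intertwining identities $\widehat{f^{(\beta)}}(\xi)=(2\pi\mathrm{i}\xi)^\beta\hat f(\xi)$, $\hat f^{(\alpha)}=(-2\pi\mathrm{i})^\alpha\widehat{x^\alpha f}$ combined with $||\hat g||_\infty\leq||g||_1$ and the Leibniz rule to bound each $\rho_{\beta,\alpha}(\hat f)$ by finitely many seminorms of $f$, and the split of $\int|f|^p$ over $|x|\leq1$ and $|x|>1$ with the decay bound $|f(x)|\leq\rho_{2,0}(f)/x^2$ to get $||f||_p\leq C_p\big(\rho_{0,0}(f)+\rho_{2,0}(f)\big)$. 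All the supporting steps you invoke (equivalence of metric convergence with $\rho_j(f_n-f)\to0$ for every $j$, seminorm bounds implying continuity of a linear map between Fr\'echet spaces, inversion giving the continuous inverse $g\mapsto\check g$ with $\check g(x)=\hat g(-x)$, and the reverse triangle inequality for the final clause) are sound, so you have in effect supplied a self-contained version of the cited proof with no gaps.
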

\begin{proof}
	For a proof, see \cite{cfa}, page 106, Proposition 2.2.6 and page 113, Corollary 2.2.15.
\end{proof}

Combining these facts, the mapping $f\mapsto||\hat{f}||_p$ is continuous as a composition of two continuous mappings, so the mapping $f\mapsto F_q(f)$ is continuous from $\mathscr{S}(\mathbb{R})\setminus\{0\}$ to $\mathbb{R}_{>0}$. As a metric space, $\mathscr{S}(\mathbb{R})\setminus\{0\}$ is connected, since $\mathscr{S}(\mathbb{R})$ is a Frechet space with $\dim\mathscr{S}(\mathbb{R})=\infty$. Recall that between two metric spaces, continuous image of a connected set is also connected, and connected sets in $\mathbb{R}$ are intervals, we conclude that
\begin{prop}
	The image of $F_q$ is an interval on $\mathbb{R}_{>0}$.
\end{prop}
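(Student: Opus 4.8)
The plan is to exploit the elementary topological principle, already flagged in the preceding paragraph, that a continuous real-valued function on a connected domain has an interval as its image. Thus the proof reduces to two ingredients: the continuity of $F_q$ and the connectedness of its domain $\mathscr{S}(\mathbb{R})\setminus\{0\}$.

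For continuity, I would note that by the cited Fact the maps $f\mapsto\|f\|_q$ and $f\mapsto\|f\|_2$ are continuous, and since $f\mapsto\hat{f}$ is a homeomorphism of $\mathscr{S}(\mathbb{R})$ the maps $f\mapsto\|\hat{f}\|_q$ and $f\mapsto\|\hat{f}\|_2$ are continuous as compositions of continuous mappings. On $\mathscr{S}(\mathbb{R})\setminus\{0\}$ the denominator $\|f\|_2\|\hat{f}\|_2$ never vanishes, so $F_q$ is well defined and continuous as a quotient of continuous functions into $\mathbb{R}_{>0}$.

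The real work lies in showing that $\mathscr{S}(\mathbb{R})\setminus\{0\}$ is connected, and this is where I expect the main obstacle: in dimension one, removing a point disconnects the space, so the hypothesis $\dim\mathscr{S}(\mathbb{R})=\infty$ must be used essentially. I would establish path-connectedness directly. Given nonzero $f,g$, if $f$ and $g$ are linearly independent then the segment $t\mapsto(1-t)f+tg$ avoids $0$, since a vanishing value would force a nontrivial linear relation between $f$ and $g$; and if $g=\lambda f$ with $\lambda>0$ the same segment stays on the positive ray spanned by $f$ and again avoids $0$. The only delicate case is $g=\lambda f$ with $\lambda<0$, where the straight segment may pass through $0$; here I would invoke infinite-dimensionality to choose some $h\in\mathscr{S}(\mathbb{R})$ linearly independent from $f$, and route the path first from $f$ to $h$ and then from $h$ to $g$, each leg being covered by the linearly independent case. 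This exhibits a path in $\mathscr{S}(\mathbb{R})\setminus\{0\}$ joining any two nonzero Schwartz functions.

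Finally I would assemble the pieces: $\mathscr{S}(\mathbb{R})\setminus\{0\}$ is path-connected, hence connected, and $F_q$ is continuous, so the image $F_q\big(\mathscr{S}(\mathbb{R})\setminus\{0\}\big)$ is a connected subset of $\mathbb{R}_{>0}$; since the connected subsets of $\mathbb{R}$ are exactly the intervals, the image of $F_q$ is an interval in $\mathbb{R}_{>0}$, as claimed.
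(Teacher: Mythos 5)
Your proposal is correct and follows essentially the same route as the paper: continuity of $F_q$ from the cited Fact plus the homeomorphism property of the Fourier transform, connectedness of $\mathscr{S}(\mathbb{R})\setminus\{0\}$ from infinite-dimensionality, and the fact that continuous images of connected sets in $\mathbb{R}$ are intervals. The only difference is that you spell out the path-connectedness of $\mathscr{S}(\mathbb{R})\setminus\{0\}$ (correctly handling the antipodal case $g=\lambda f$, $\lambda<0$, by routing through a linearly independent $h$), whereas the paper simply asserts connectedness from $\dim\mathscr{S}(\mathbb{R})=\infty$.
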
	
\par Now we only need to study the endpoint of $Im(F_q)$. But firstly, let's see what could we get by using some simple functions to calculate $F_q$. We use the same construction in \cite{refww}.
\paragraph{}For $a>1$, define functions $f_{a}$ by
\begin{equation*}
f_{a}(x)=\mathrm{e}^{-\pi(a^2-1)x^2}\mathrm{e}^{-2\pi\mathrm{i}ax^2}.
\end{equation*}
Clearly, functions $f_{a}\in\mathscr{S}(\mathbb{R})$, and one can calculate the Fourier transform of $f_{a}$:
\begin{equation*}
\hat{f}_{a}(\xi)=\frac{1}{a+\mathrm{i}}\exp\bigg(\frac{-\pi\xi^2(a^2-1)}{(a^2+1)^2}\bigg)\exp\bigg(-2\pi\mathrm{i}\xi^2\frac{a}{(a^2+1)^2}\bigg).
\end{equation*}
Since $\int_\mathbb{R}\mathrm{e}^{-\pi x^2}\mathrm{d}x=1$, we get, for any $1\leq q<\infty$,
\begin{equation*}
\big|\big|f_{a}\big|\big|_{L^q}=\bigg(\frac{1}{\sqrt{q(a^2-1)}}\bigg)^{\frac{1}{q}},\ \big|\big|\hat{f}_{a}\big|\big|_{L^q}=\frac{1}{\sqrt{a^2+1}}\bigg(\frac{a^2+1}{\sqrt{q(a^2-1)}}\bigg)^{\frac{1}{q}}.
\end{equation*}
Therefore, one calculates $F_q$ precisely:
\begin{equation*}
F_q(f_{a})=\frac{||f_{a}||_q||\hat{f}_{a}||_q}{||f_{a}||^2}=\sqrt{2}\ \bigg(\frac{1}{q}\bigg)^{1/q}\bigg(\frac{a^2+1}{a^2-1}\bigg)^{1/q-1/2}=\sqrt{2}\ \bigg(\frac{1}{q}\bigg)^{1/q}\bigg(\frac{t+1}{t-1}\bigg)^{1/q-1/2},\ a^2=t>1.
\end{equation*}
We notice that the mapping $t\mapsto\frac{t+1}{t-1}$ is decreasing when $t>1$, so if $1/q-1/2>0$, the image of the mapping $t\mapsto(\frac{t+1}{t-1})^{1/q-1/2},\ t>1$
is $(1,\infty)$. Similarly, if $1/q-1/2<0$, then image of the above mapping is $(0,1)$. In conclusion, by testing functions $f_{a}$ on $F_q$, we have:
\begin{prop}\label{basic bound estimate}
	(i) If $1<q<2$, then at least $\big(\sqrt{2}(\frac{1}{q})^{\frac{1}{q}},\infty\big)\subset Im(F_q)$.\\
(ii) If $2<q<\infty$, then at least $\big(0,\sqrt{2}(\frac{1}{q})^{\frac{1}{q}}\big)\subset Im(F_q)$.
\end{prop}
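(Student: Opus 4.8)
The plan is to prove the proposition entirely by testing the explicit family $f_a$ already introduced: each $f_a$ belongs to $\mathscr{S}(\mathbb{R})\setminus\{0\}$, so every value $F_q(f_a)$ lies in $Im(F_q)$, and the closed-form expression for $F_q(f_a)$ computed above reduces the problem to reading off the range of a single real-variable function. Setting $t=a^2\in(1,\infty)$, we have
\begin{equation*}
F_q(f_a)=\sqrt{2}\,\Big(\tfrac{1}{q}\Big)^{1/q}\,h(t)^{1/q-1/2},\qquad h(t):=\tfrac{t+1}{t-1},
\end{equation*}
so it suffices to describe the range of $t\mapsto h(t)^{1/q-1/2}$ on $(1,\infty)$ and then multiply by the fixed positive constant $\sqrt{2}(1/q)^{1/q}$.

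First I would record the elementary behaviour of $h$: it is continuous and strictly decreasing on $(1,\infty)$, with $h(t)\to+\infty$ as $t\to1^+$ and $h(t)\to1^+$ as $t\to+\infty$. Consequently $t\mapsto h(t)^{1/q-1/2}$ is continuous and strictly monotone, so its range is exactly the open interval bounded by its two endpoint limits, and I only need to identify those two limits in each case.

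Next I would split according to the sign of the exponent. In case (i), $1<q<2$ forces $1/q-1/2>0$; then $h(t)^{1/q-1/2}\to+\infty$ as $t\to1^+$ and $\to1^+$ as $t\to+\infty$, so its range is $(1,\infty)$ and the values $F_q(f_a)$ fill $\big(\sqrt{2}(1/q)^{1/q},\infty\big)$. In case (ii), $2<q<\infty$ forces $1/q-1/2<0$; then $h(t)^{1/q-1/2}\to0^+$ as $t\to1^+$ and $\to1^-$ as $t\to+\infty$, so its range is $(0,1)$ and the values $F_q(f_a)$ fill $\big(0,\sqrt{2}(1/q)^{1/q}\big)$. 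Since each $f_a$ is an admissible test function, both intervals are contained in $Im(F_q)$, which gives the two assertions.

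I do not anticipate a genuine obstacle here, because once the Fourier transform and the $L^q$ norms of $f_a$ are granted the argument is purely a monotonicity-and-limit computation in one variable. The only subtlety worth a line is verifying both boundary limits of $h$, so that the range is genuinely the entire open interval rather than a proper subinterval; note that the endpoint $\sqrt{2}(1/q)^{1/q}$ is never attained, matching the open intervals in the statement. Turning these one-sided intervals into the full surjectivity (resp.\ sharp lower bound) announced in the introduction is a separate matter, and will require additional test functions beyond the family $f_a$.
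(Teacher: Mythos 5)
Your proposal is correct and is essentially identical to the paper's own argument: the paper likewise evaluates $F_q(f_a)=\sqrt{2}\,(1/q)^{1/q}\big(\tfrac{t+1}{t-1}\big)^{1/q-1/2}$ with $t=a^2>1$, uses the monotonicity of $t\mapsto\tfrac{t+1}{t-1}$ and the sign of the exponent $1/q-1/2$ to read off the range $(1,\infty)$ or $(0,1)$, and concludes the stated inclusions. No differences worth noting.
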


\section{The case $2<q<\infty$}
\paragraph{}In this section, we always assume that $2<q<\infty$, and we will prove that $Im(F_q)=(0,\infty)$. By Proposition \ref{basic bound estimate}, we only need to construct a sequence $\{f_n\}_n\subset\mathscr{S}(\mathbb{R})$, so that $F_q(f_n)\rightarrow\infty$ as $n$ goes to infinity. We use the same construction in \cite{refww}.
\paragraph{} For each $c>0$, define the function
\begin{equation*}
g_c(x)=\frac{1}{\sqrt{c}}\ \mathrm{e}^{-\pi\frac{x^2}{c^2}}+\sqrt{c}\ \mathrm{e}^{-\pi c^2x^2}.
\end{equation*}
Clearly, functions $g_c\in\mathscr{S}(\mathbb{R})$ and $\hat{g_c}=g_c$ for all $c>0$, so $F_q(g_c)=\big(\frac{||g_c||_q}{||g_c||_2}\big)^2$. A direct calculation shows that 
\begin{equation}
||g_c||^2_2=\sqrt{2}+\frac{2c}{\sqrt{c^4+1}},\ c>0.\label{L2norm}
\end{equation}
Now we estimate $||g_c||^2_q$, by definition
\begin{equation*}
\begin{aligned}
||g_c||^2_q=\bigg\{\int\bigg(\frac{1}{c}\ \mathrm{e}^{-2\pi\frac{x^2}{c^2}}+c\ \mathrm{e}^{-2\pi c^2x^2}+2\mathrm{e}^{-\pi(c^2+1/c^2)x^2}\bigg)^{\frac{q}{2}}\mathrm{d}x\bigg\}^{\frac{2}{q}}.\\
\end{aligned}
\end{equation*}
Notice that $\big(\sum_i a_i\big)^{q/2}\geq\sum_ia_i^{q/2}$ when $q/2>1$, therefore
\begin{equation}\label{keyineq}
\begin{aligned}
||g_c||^2_q\geq&\bigg\{\int\bigg(\frac{1}{c}\bigg)^{q/2}\ \mathrm{e}^{-\pi q\frac{x^2}{c^2}}+c^{q/2}\ \mathrm{e}^{-\pi q c^2x^2}+2^{q/2}\mathrm{e}^{\frac{-\pi q}{2}(c^2+1/c^2)x^2}\mathrm{d}x\bigg\}^{\frac{2}{q}}\\
=&\bigg\{\bigg(\frac{1}{c}\bigg)^{q/2}\frac{1}{\sqrt{q/c^2}}+ c^{q/2}\frac{1}{\sqrt{qc^2}}+2^{q/2}\frac{1}{\sqrt{q(c^2+1/c^2)/2}}\bigg\}^{\frac{2}{q}}.
\end{aligned}
\end{equation}
In conclusion, for all $c>0$:
\begin{equation}
||g_c||^2_q\geq\bigg\{\frac{1}{\sqrt{q}}c^{1-\frac{q}{2}}+ \frac{1}{\sqrt{q}}c^{\frac{q}{2}-1}+\frac{2^{(q+1)/2}}{\sqrt{q}}\frac{c}{\sqrt{c^4+1}}\bigg\}^{\frac{2}{q}}\geq \bigg(\frac{1}{\sqrt{q}}c^{\frac{q}{2}-1}\bigg)^{2/q}=\bigg(\frac{1}{q}\bigg)^{1/q}c^{1-\frac{2}{q}}.\label{Lqnorm}
\end{equation}
\paragraph{}Combining \eqref{L2norm} and \eqref{Lqnorm}, we get
\begin{equation*}
F_q(g_c)\geq\bigg(\frac{1}{q}\bigg)^{1/q}\frac{c^{1-\frac{2}{q}}}{\sqrt{2}+\frac{2c}{\sqrt{c^4+1}}}.
\end{equation*}
Finally, let $c\rightarrow\infty$, and we have $F_q(g_c)\rightarrow\infty$.

\section{The case $1<q<2$}\label{q_between_1_and_2}
\paragraph{}In this section, we always assume that $1<q<2$. In fact, the case $q<2$ is easier. Let $q^\prime$ be the conjugate exponent of $q$. H\"older inequality implies
\begin{equation}
||f||_2\leq ||f||^{1/2}_q||f||^{1/2}_{q^\prime}.
\end{equation}
\par Finally, recall the Hausdorff-Young inequality, when $1\leq q\leq 2$, we have
\begin{equation*}
||\hat{f}||_{q\prime}\leq||f||_q,\ f\in\mathscr{S}.
\end{equation*}
Therefore, we choose $g=\hat{f}\in\mathscr{S}$ and we have $||\hat{g}||_{q^\prime}\leq||g||_q$, i.e. 
\begin{equation}
||f||^{1/2}_{q^\prime}\leq||\hat{f}||^{1/2}_q.
\end{equation}
\par Combining these inequalities, we get
\begin{equation*}
||f||_2\leq||f||^{1/2}_q||\hat{f}||^{1/2}_q,
\end{equation*}
which is to say $\frac{||f||_q||\hat{f}||_q}{||f||^2_2}\geq1$ for any $f\in\mathscr{S}(\mathbb{R})\setminus\{0\}$.

\section{General $p,q$}
In this section, we discuss a general case 
\begin{equation*}
F_{q,p}(f)=\frac{||f||_q||\hat{f}||_q}{||f||_p||\hat{f}||_p},\ 1<q<p<\infty.
\end{equation*}
\par Firstly, let's see what could we get by using special functions. We still use the functions $f_{a}$ in Section 2. Recall
\begin{equation*}
\begin{aligned}
f_{a}(x)&=\mathrm{e}^{-\pi(a^2-1)x^2}\mathrm{e}^{-2\pi\mathrm{i}ax^2},\ a>1.\\
\end{aligned}
\end{equation*}
A direct calculation shows that for each $1<q<\infty$,
\begin{equation*}
\begin{aligned}
||f_a||_q=\bigg(\frac{1}{q(a^2-1)}\bigg)^\frac{1}{2q},\ ||\hat{f}_a||_q=(a^2+1)^{-1/2}\bigg(\frac{a^2+1}{\sqrt{q(a^2-1)}}\bigg)^{\frac{1}{q}}.
\end{aligned}
\end{equation*}
\paragraph{}
Replacing $q$ by $p$, we have
\begin{equation*}
F_{q,p}(f_a)=\frac{(1/q)^{1/q}}{(1/p)^{1/p}}\bigg(\frac{a^2+1}{a^2-1}\bigg)^{\frac{1}{q}-\frac{1}{p}}=\frac{(1/q)^{1/q}}{(1/p)^{1/p}}\bigg(\frac{t+1}{t-1}\bigg)^{\frac{1}{q}-\frac{1}{p}},\ t=a^2>1.
\end{equation*}
So at least $\bigg(\frac{(1/q)^{1/q}}{(1/p)^{1/p}},\infty\bigg)\subset Im(F_{q,p})$.
\subsection{Theorem \ref{generalpq}, case (i)}
\paragraph{}Now we prove (i) of Theorem \ref{generalpq}: $\frac{1}{p}+\frac{1}{q}\geq1$.  Note that $q<2$ always holds in this case. Dividing it into two sub-cases: $q<p<2$ and $q<2\leq p$. Two sub-cases are a little bit different, although results are the same.
\subsubsection{When $q<p<2$}
\paragraph{}Writing $p$ as a convex combination of $q$ and $2$:
\begin{equation*}
p=\lambda q+(1-\lambda)2,\ 0<\lambda<1.
\end{equation*}
Then $\int|f|^p=\int|f|^{\lambda q}|f|^{(1-\lambda)2}$. Notice that $\frac{1}{\lambda}\in(1,\infty)$, by H\"older's inequality ($\lambda+(1-\lambda)=1$), we have
\begin{equation}\label{interpolation ineq}
\begin{aligned}
||f||_p\leq||f||^{\frac{1/p-1/2}{1/q-1/2}}_q||f||^{\frac{1/q-1/p}{1/q-1/2}}_2.
\end{aligned}
\end{equation}
Inequality \eqref{interpolation ineq} also holds for $\hat{f}$,
\begin{equation}\label{interpolation ineq 2}
\begin{aligned}
||\hat{f}||_p\leq||\hat{f}||^{\frac{1/p-1/2}{1/q-1/2}}_q||\hat{f}||^{\frac{1/q-1/p}{1/q-1/2}}_2.
\end{aligned}
\end{equation}
\par Combining the above two inequalities and definition of $F_{q,p}$, we have
\begin{equation*}
F_{q,p}(f)\geq\Bigg(\frac{||f||_q||\hat{f}||_q}{||f||_2||\hat{f}||_2}\Bigg)^{\frac{1/q-1/p}{1/q-1/2}}=\big(F_q(f)\big)^{\frac{1/q-1/p}{1/q-1/2}}.
\end{equation*}
Finally, notice that $\frac{1/q-1/p}{1/q-1/2}>0$, so if $1<q<p<2$, then we come back to the case $1<q<2,\ p=2$ in Section \ref{q_between_1_and_2}, which asserts that image of $F_q$ is bounded below by $1$. Therefore, the image of $F_{q,p}$ is bounded below by $1$.

\subsubsection{When $q<2\leq p$}
\paragraph{}Since $1<p^\prime\leq2$, by Hausdorff-Young inequality $||\hat{f}||_p\leq||f||_{p^\prime}.$
This shows that 
\begin{equation*}
\frac{||f||_q||\hat{f}||_q}{||f||_p||\hat{f}||_p}\geq\frac{||f||_q||\hat{f}||_q}{||\hat{f}||_{p^\prime}||f||_{p^\prime}}=F_{q,p^\prime}(f).
\end{equation*}
Now we come back to the case in subsection 5.1.1, because $1< q\leq p^\prime<2$.

\subsection{Theorem \ref{generalpq}, case (ii)}
\paragraph{}Now we discuss the case $\frac{1}{p}+\frac{1}{q}<1$. Also, remember that $q<p$, therefore $p>2$. Here we use the same function $g_c$ in section 3. Recall that for $c>0$,
\begin{equation*}
g_c(x)=\frac{1}{\sqrt{c}}\ \mathrm{e}^{-\pi\frac{x^2}{c^2}}+\sqrt{c}\ \mathrm{e}^{-\pi c^2x^2},\ g_c=\hat{g_c}.
\end{equation*}
The inequality \eqref{keyineq} shows that ($p>2$)
\begin{equation*}
||g_c||^2_p\geq\bigg\{\frac{1}{\sqrt{p}}c^{1-\frac{p}{2}}+ \frac{1}{\sqrt{p}}c^{\frac{p}{2}-1}+\frac{2^{(p+1)/2}}{\sqrt{p}}\frac{c}{\sqrt{c^4+1}}\bigg\}^{\frac{2}{p}}\thicksim \bigg(\frac{c^{\frac{p}{2}-1}}{\sqrt{p}}\bigg)^{2/p},\ \text{as\ }c\rightarrow\infty.
\end{equation*}
Similarly, for $q>1$, we have $(a_1+a_2+a_3)^{\frac{q}{2}}\leq \max\{1,3^{q/2-1}\}\bigg( a_1^{\frac{q}{2}}+ a_2^{\frac{q}{2}}+ a_3^{\frac{q}{2}}\bigg)$, so we just reverse the inequality \eqref{keyineq} and get
\begin{equation*}
\begin{aligned}
||g_c||^2_q\leq&\frac{1}{\sqrt{2}}+ \frac{1}{\sqrt{2}}+\frac{2}{\sqrt{2}}\frac{c}{\sqrt{c^4+1}}\leq4,\ \text{when\ }q=2.\\
||g_c||^2_q\leq&\bigg\{\frac{1}{\sqrt{q}}c^{1-\frac{q}{2}}+ \frac{1}{\sqrt{q}}c^{\frac{q}{2}-1}+\frac{2^{(q+1)/2}}{\sqrt{q}}\frac{c}{\sqrt{c^4+1}}\bigg\}^{\frac{2}{q}}\thicksim \bigg(\frac{c^{1-\frac{q}{2}}}{\sqrt{q}}\bigg)^{2/q},\ \text{as\ }c\rightarrow\infty,\ \text{when\ }q<2.\\
||g_c||^2_q\leq&3^{1-\frac{2}{q}}\bigg\{\frac{1}{\sqrt{q}}c^{1-\frac{q}{2}}+ \frac{1}{\sqrt{q}}c^{\frac{q}{2}-1}+\frac{2^{(q+1)/2}}{\sqrt{q}}\frac{c}{\sqrt{c^4+1}}\bigg\}^{\frac{2}{q}}\thicksim 3^{1-\frac{2}{q}}\bigg(\frac{c^{\frac{q}{2}-1}}{\sqrt{q}}\bigg)^{2/q},\ \text{as\ }c\rightarrow\infty\ \text{when\ }q>2.
\end{aligned}
\end{equation*}
Consequently, as $c$ goes to infinity, we have $||g_c||^2_q/||g_c||^2_p\leq C_{p,q}\ c^{2(1/q+1/p-1)}$ when $q\leq2$ and $||g_c||^2_q/||g_c||^2_p\leq C_{p,q}\ c^{2(1/p-1/q)}$ when $q>2$, where $C_{p,q}$ depends only on $p$ and $q$. Finally, just note that $1/q+1/p-1<0$ and $1/p-1/q<0$, so $\lim_{c\rightarrow\infty}||g_c||^2_q/||g_c||^2_p=0$.

\end{document}